\newtheorem{theorem}{Theorem}
\newtheorem{definition}[theorem]{Definition}
\newtheorem{lemma}[theorem]{Lemma}
\newtheorem{remark}[theorem]{Remark}
\newcommand{\N}{\hbox{\ensuremath{\mathbb{N}}}}
\newcommand{\R}{\hbox{\ensuremath{\mathbb{R}}}}
\newcommand{\hau}{\mathcal H}
\newcommand{\pac}{\mathcal P}
\begin{document}

\title[Hausdorff and Packing measures of Cantor sets]{Exact Hausdorff and Packing measure of certain Cantor sets, not necessarily self-similar or homogeneous}
\author{Leandro Zuberman} 
\address{Departamento de Matem\'atica, FCEN-UNMdP, Dean Funes 3350\\ B7602AYL - Mar del Plata - BA - Argentina
}
\email{leandro.zuberman@gmail.com}
\subjclass[2000]{28A78,28A80}
\keywords{Hausdorff measure, Packing measure, Cantor set}

\begin{abstract}
We compute the exact Hausdorff and Packing measures of linear Cantor sets which might not be self-similar or homogeneous . The calculation is based on the local behavior of the natural probability measure supported on the sets.
\end{abstract}

\date{}
\maketitle


\section{Introduction}

Relevant tools in the study of sets of null Lebesgue measure are the Hausdorff and Packing dimensions and measures. During the last decades an enormous body of literature has been developed in order to estimate dimension of sets. However, much less is known about the exact value of the respective measures. 

In the last years, the exact measure has attracted the attention of the community. Very general results have been demonstrated by Olsen who has computed the exact Hausdorff and Packing measure of self-similar sets satisfying the open set condition \cite{Ols08}. Local densities are the main tool used by Olsen. 
Mor\'an and Llorente have established results in the same direction and made considerable progress developing  algorithms to approximate these measures \cite{Mor05, LM12, LM16}. 

For linear Cantor sets, the first result known, is due to Marion who established the exact Hausdorff measure for self-similar Cantor sets satisfying the open set condition \cite{Mar85}.
Ayer and Strichartz \cite{AS90} computed the exact Hausdorff measure of self-similar Cantor sets satisfying weaker conditions than the open set condition and gave very precise algorithms to obtain it. Qu et al \cite{QRS03}, have determined the Hausdorff measure of homogeneous (meaning that the gaps have all the same length) Cantor sets including the family considered by Marion and some non self-similar sets. Recently, Pedersen et al \cite{PP13} have partially extended the result by Qu et al, computing the exact Hausdorff measure of a family of Cantor sets not previously considered, including non-homogeneous ones. However, some sets considered by Qu et al do not satisfy the hypothesis required in \cite{PP13}. In the present article, we consider Cantor sets which might not be self-similar or homogeneous, including the considered in \cite{QRS03} and in \cite{PP13} as well as examples not considered before and establish a formula for the exact Hausdorff measure (see Theorem \ref{Hausdorff}). The hypothesis assumed are of separation type and decay on the lengths of gaps. 

The results for Hausdorff measure mentioned in the previous paragraph, have its counterpart for Packing measures. For the one third classical Cantor set, Feng et al computed the exact Hausdorff measure, based on the lower density \cite{FHW00}. For self-similar sets satisfying the open set condition, the exact Packing measure was established in \cite{Fen03, QZJ04}. Baek \cite{Bae06}, continuous their work giving some estimates on Packing measures for non self-similar sets . Garc\'\i a \cite{GZ13} et al gave the exact Packing measures for homogeneous Cantor sets that are not necessarily self-similar. In this paper we establish a formula generalizing the previous result but including non self-similar or homogeneous sets (see Theorem \ref{packing}). The assumed hypothesis are of separation type and we also impose a bound on the number of sub-intervals (or children) at each step. While the gaps do not play any role in the case of the Hausdorff measure -provided a separation condition is satisfied- they are crucial for computing the Packing measure. This made the calculationof Packing measure in this case -with gaps of variable length- much more technical.

The exact measure is closely related to the local behavior of the natural measure supported on the set. All results mentioned in the previous paragraphs use this fact. In the case of self-similar sets with a separation condition, the local behavior can be expressed as an absolute minimum or maximum. However, this form is not available for non self-similar sets and the limiting process can not be avoided. This is probably the reason why the algorithms are only proposed for self-similar sets. 

It is worth mentioning, that in a different direction, Qiu has computed the exact Hausdorff and Packing measures of some self-similar sets with overlaps \cite{Qiu15} of finite type. 

\section{Notation and statement of results}
We start by defining the family of Cantor sets we are going to work with. We follow a product structure construction. As usual, we take collections of finite and nested closed intervals,step by step. The Cantor set will be the (limit) intersection of all these sets. In this case, the number of intervals at each step, the contraction ratio and the location of each interval are variable at each step.  

\subsection{Construction of Cantor sets}\label{setting}
Let us make precise what we explained in the previous paragraph. 
Fix a real sequence $(r_n)_{n\in\N}\subset (0,1/2)$, a sequence of integers $(m_n)_{n\in\N}\subset\N_{\geq 2}$ and a sequence of finite sets $D_n\subset \left[0,\frac{1-r_n}{r_n}\right]$, each with $m_n$ elements. These will yield the contraction radii, the number of children and the location of those children at each step. 

The elements of $D_n$ will be called digits and some assumptions will be assumed. The maximum and minimum in each $D_n$ will be prescribed and we introduce the following notation: \[D_n:=\{d^n_1:=0,d^n_2,\dots,d^n_{m_n-1},d^n_{m_n}:=\frac{1-r_n}{r_n}\}\]
We assume that $d^n_j$ increase with $j$ and $d^n_{j+1}-d^n_j>1$. Note that this condition together with the election of the first and last digits implies that $r_n m_n<1$. It will be useful to introduce the following sequences: $\mu(n)=m_1\dots m_n$ and $s_n=r_1\dots r_n$.

With this setting, we introduce the sets $P_n:=D_1s_1+\dots+D_ns_n=\{\sum_{j=1}^n d_js_j:d_j\in D_j\}$ and
\[C_n=P_n+[0,s_n]=
\bigcup_{x\in P_n}[x,x+s_n].\]

Finally, 
we define the Cantor set associated to the sequences $(r_n)$, $(m_n)$ and digits $D_n$ as the intersection of all $C_n$: 
\[C:=C((r_n),(m_n),(D_n))=
\bigcap_{n\in\N} C_n.\] 

Note that $C_n$ is the union of $\mu(n)$ intervals of lengths $s_n$. 
A \emph{basic interval} of order $n$ will be one of the intervals of $C_n$ and a \emph{simple interval} of order $n$ will be a union of consecutive basic intervals of order $n$.

Inside each basic interval of order $n-1$ there are $m_n$ basic intervals of order $n$. So, the difference between them are $m_{n}-1$ open intervals; we will call them gaps and note with $g^{n}_1,\dots,g^{n}_{m_{{n}-1}}$. We abuse the notation and use the same letter for the gap and its length: 
\begin{equation}
g^n_j=s_n(d^n_{j+1}-d^n_{j}-1)\text{ for }j=1,\dots,m_n-1. 
\label{gaps}
\end{equation}

\subsection{Finite and infinite words}

As usual, words are useful to designate element and intervals in Cantor sets. For $n\geq 1$ define
\[\Sigma_n=\{i_1\dots i_n:1\leq i_j\leq m_j , \ 1\leq j\leq n \},\]
 $\Sigma=\bigcup_{n\geq 1}\Sigma_n$ and $\overline{\Sigma}=\{i_1\dots i_n\dots :1\leq i_j\leq m_j , \ 1\leq j \}.$ 
 If $\sigma\in\Sigma_n$ the length of $\sigma$ will be $|\sigma|=n$. If $\sigma$ belongs to $\Sigma_n$ or $\overline{\Sigma}$ and $j<n$ we denote the truncation of $\sigma$ as $\sigma|j=\sigma_1\dots \sigma_j$. The concatenation of two words $\sigma\in\Sigma_n$ and $\tau\in\Sigma_k$ will be $\sigma\tau\in\Sigma_{n+k}$ and analogously if $\tau\in\overline\Sigma$. 

 Elements of $P_n$ and basic intervals of order $n$ are associated to elements of $\Sigma_n$ in the following way. Given $\sigma=i_1\dots i_n\in\Sigma_n$, we associate the element $x=\sum _{j=1}^n d^n_{i_j}s_j\in P_n$ and the interval $[x,x+s_n]$. This interval will be denoted $I_\sigma$. Note that the separation condition $d^n_{j+1}-d^n_j>1$ guarantees that the correspondence is biunivocal. We will also need to introduce a notation for the borders of the interval: $I_\sigma=[a(\sigma),b(\sigma)]$. Finally, if $x\in C$ there is a unique word  $\sigma\in\overline\Sigma$ such that $x\in I_{\sigma|n}$ for all $n\in\N$. We will denote $\sigma=\sigma(x)$. 
 
 \subsection{Natural measure on Cantor set} Following Kolmogorov's extension Theorem, there is a unique measure supported on $C$ and satisfying $\mu(I_\sigma)=\mu(n)^{-1}$ for all $\sigma\in\Sigma_n$. Whenever the Cantor set is fixed, the measure $\mu$ will denote this measure.

 \subsection{Statement of Results}
 With these notations, the Hausdorff and Packing dimension of the Cantor set are well known. Actually, (see for example \cite{Fal90}):
 \[\dim_H C= \liminf_{n\to\infty}\frac{\log\mu(n)}{\log s_n}, \qquad \dim_P C= \limsup_{n\to\infty}\frac{\log\mu(n)}{\log s_n}.\]
 The objective of this paper is to determine the exact measures. 
 We state here the Theorems which will be proved in the upcoming sections. We will assume the following separation condition:
 \begin{equation}\label{separation}  d^n_{j+1}-d^n_j\geq c>1\qquad \forall n\geq1\quad \forall j=1,2,\dots,m_n.\end{equation}

  \begin{theorem}\label{Hausdorff}
With the above notations, suppose \eqref{separation} is satisfied and 
for any $J_1,J_2\subset \{1,\dots,m_n-1\}$, with $\#J_1\geq \#J_2$ and any $j=1,\dots,m_{n-1}-1$, suppose that:\begin{equation}\label{cond2}
 \frac{\sum_{t\in J_2} g_t^n}{g^{n-1}_j+\sum_{t\in J_1}g^n_t}\leq 1-\left(\frac{\#J_1-\#J_2+1}{\#J_1+1}\right).
 \end{equation}
If $s=\liminf_{n\to\infty}\frac{\log\mu(n)}{\log s_n}$
then the $s$-Hausdorff measure of $C$ is: \newline $\hau^s(C)=\liminf_{n\to\infty}\mu(n) s_n^s$. 
   
  \end{theorem}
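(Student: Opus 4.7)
The plan is to establish matching upper and lower bounds $\hau^s(C)\le L$ and $\hau^s(C)\ge L$, where $L=\liminf_{n\to\infty}\mu(n)s_n^s$. The upper bound is immediate: the family $\{I_\sigma\}_{\sigma\in\Sigma_n}$ covers $C$ by $\mu(n)$ intervals of diameter $s_n$, so $\hau^s_{s_n}(C)\le \mu(n)s_n^s$ for every $n$, and letting $n\to\infty$ along a subsequence realising the liminf gives $\hau^s(C)\le L$. All the substance of the argument is in the lower bound.

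For the lower bound I would invoke the mass distribution principle applied to the natural probability measure $\mu$. It suffices to show that for every $\epsilon>0$ there exists $\delta>0$ such that every interval $U$ with $|U|<\delta$ satisfies
\[
\mu(U)\le (L-\epsilon)^{-1}|U|^s,
\]
since then any countable cover $\{U_k\}$ of $C$ by intervals of diameter less than $\delta$ yields $1=\mu(C)\le\sum_k\mu(U_k)\le (L-\epsilon)^{-1}\sum_k|U_k|^s$, whence $\hau^s(C)\ge L-\epsilon$. Because $\mu$ is supported on $C$, replacing $U$ by the convex hull of $U\cap C$ only decreases $|U|$ while preserving $\mu(U)$, so one may assume $U=[a,b]$ with $a,b\in C$. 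For such $U$, let $n$ be the largest integer with $a$ and $b$ in a common basic interval $I_\sigma$ of order $n$; then $a\in I_{\sigma i}$ and $b\in I_{\sigma j}$ for children $1\le i<j\le m_{n+1}$, and $U$ decomposes as
\[
(U\cap I_{\sigma i})\;\cup\;\bigcup_{k=i+1}^{j-1}I_{\sigma k}\;\cup\;\bigcup_{k=i}^{j-1}g^{n+1}_k\;\cup\;(U\cap I_{\sigma j}),
\]
with $U\cap I_{\sigma i}$ and $U\cap I_{\sigma j}$ amenable to the same recursive analysis. The separation hypothesis \eqref{separation} guarantees that the recursion descends strictly, and since $s_n\to 0$ it terminates in finitely many steps for each fixed $|U|$.

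The heart of the argument, and what I expect to be the main obstacle, is bookkeeping $\mu(U)/|U|^s$ through this recursion and converting the resulting bounds into a ratio of the form $1/(\mu(N)s_N^s)$. Rewriting \eqref{cond2} as
\[
(\#J_1+1)\sum_{t\in J_2}g^n_t\le \#J_2\Bigl(g^{n-1}_j+\sum_{t\in J_1}g^n_t\Bigr),
\]
one recognises it as exactly the inequality needed to compare the length-to-measure ratio of a cluster of level-$n$ basic intervals flanked by an arbitrary subset of level-$n$ gaps with that of a single level-$(n-1)$ basic interval containing them. Concretely, I would prove by induction on recursion depth that every partial-overlap contribution can be replaced, without decreasing $\mu(U)/|U|^s$, by a simple interval at a coarser level; after finitely many such replacements $U$ reduces to a simple interval of some order $N$ consisting of $k$ consecutive basic intervals and their separating gaps. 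The naive estimate then yields $k^{1-s}/(\mu(N)s_N^s)$, and a final application of \eqref{cond2} absorbs the factor $k^{1-s}$ into the separating gaps to give $\mu(U)/|U|^s\le 1/(\mu(N)s_N^s)$. Since $|U|<\delta$ forces $N$ arbitrarily large, and since by definition $\mu(N)s_N^s\ge L-\epsilon$ for all $N$ sufficiently large, the required bound $\mu(U)\le (L-\epsilon)^{-1}|U|^s$ follows, completing the proof.
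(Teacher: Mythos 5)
Your architecture matches the paper's in outline (upper bound from the natural covers; lower bound via a mass distribution principle for $\mu$, reducing to intervals with endpoints in $C$ and recursing on the level structure, with \eqref{cond2} as the cross-level gap comparison), but the two steps you leave as assertions are exactly where the proof lives, and neither goes through as stated. First, the endgame: for a simple interval $P$ of order $N$ made of $k$ consecutive basic intervals inside a single basic interval of order $N-1$, you propose to start from $\mu(P)/|P|^s\le k^{1-s}/(\mu(N)s_N^s)$ and ``absorb $k^{1-s}$ into the separating gaps'' by \eqref{cond2}. But \eqref{cond2} compares level-$n$ gaps against a level-$(n-1)$ gap plus level-$n$ gaps; inside one basic interval of order $N-1$ no level-$(N-1)$ gap is available, and \eqref{cond2} says nothing about the absolute size of $\sum_i g^N_i$ relative to $s_N$. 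What actually closes this case (the paper's Case 1 of Lemma \ref{Frostman}) is a two-scale concavity argument: the prescribed extreme digits $d^n_1=0$, $d^n_{m_n}=(1-r_n)/r_n$ and the separation $d^n_{j+1}-d^n_j\ge c>1$ give $|P|\ge\frac{i}{m_N-1}s_{N-1}+\frac{m_N-1-i}{m_N-1}s_N$ with $k=i+1$, and then concavity of $x\mapsto x^s$ together with the liminf bound $\mu(n)s_n^s\ge B_s-\varepsilon$ at \emph{both} levels $N$ and $N-1$ yields $|P|^s\ge(B_s-\varepsilon)(i+1)/\mu(N)$. Without invoking the coarser scale $s_{N-1}$ the factor $k^{1-s}$ cannot be removed.

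Second, the recursion: you claim each partial-overlap piece ``can be replaced, without decreasing $\mu(U)/|U|^s$, by a simple interval at a coarser level.'' A single replacement by either the largest coarser simple interval contained in $P$ or the smallest one containing $P$ moves numerator and denominator in the same direction, so neither monotone substitution controls the ratio. The paper's Case 2a instead sandwiches $P$ between two order-$(n-1)$ simple intervals $P_1\subset P\subset P_2$ sharing the left endpoint, writes $|P|=\lambda|P_1|+(1-\lambda)|P_2|$, uses \eqref{cond2} to prove $\lambda\le 1-\tau_n/m_n$ (the only place \eqref{cond2} enters), and then applies concavity of $x\mapsto x^s$ and the inductive hypothesis to both $P_1$ and $P_2$, matching the convex combination of lengths against the corresponding decomposition of $\mu(P)$; a separate translation argument (Case 2b) reduces general position to the aligned case. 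These two devices --- the two-scale interpolation and the two-sided convex sandwich with $\lambda$ controlled by \eqref{cond2} --- are the substance of the theorem, and your write-up supplies neither.
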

  
In the preceding formula, we see that the length of gaps is not relevant for the Hausdorff measure. In contrast, it will be very important in the case of the Packing measure. The notation can become very technical, so we introduce it before stating the Theorem. Each constant corresponds to left, right or center approximation to the ratio between the measure of basic interval and its length.  

Fix $t:=\limsup_{n\to\infty}\frac{\log\mu(n)}{\log s_n}$. For each $n\geq 1$ there is a $k_n\in\{1,2,\dots,m_n-1\}$ where the following maximum is reached:
\[\alpha_n:=\max_{1\leq k\leq m_n-1}\left\{\frac{\mu(n)}{k}\left( ks_n+\sum_{i=1}^k g^n_i\right)^t\right\}=\frac{\mu(n)}{k_n}\left( k_ns_n+\sum_{i=1}^{k_n} g^n_i\right)^t\]
Also define $\alpha=\limsup_{n\to\infty}\alpha_n$. 

On the other side:
\begin{eqnarray*}
 \beta_n:&=&\max_{2\leq k\leq m_n}\left\{\frac{\mu(n)}{m_n-k+1}\left( (m_n-k+1)s_n+\sum_{i=k}^{m_n-1} g^n_i\right)^t\right\}\\
 &&=\frac{\mu(n)}{m_n-k_n+1}\left( (m_n-k_n+1)s_n+\sum_{i=k_n}^{m_n-1} g^n_i\right)^t
 \end{eqnarray*}

and $\beta=\limsup_{n\to\infty}\beta_n$.

Finally, for each $n\geq 1$ such that $m_n\geq3$ take $k_n^1,k_n^2\in\{2,3,\dots,m_n-1\}$ such that:
\begin{eqnarray*}
 \max_{2\leq k^1<k^2<m_n}\left\{\frac{\mu(n)}{k^2-k^1+1}\left((k^2-k^1+1)s_n+\sum_{i=k^1-1}^{k^2}g_i^n\right)^t\right\}\\=
\frac{\mu(n)}{k_n^2-k_n^1+1}\left((k_n^2-k_n^1+1)s_n+\sum_{i=k_n^1-1}^{k_n^2}g_i^n\right)^t.
\end{eqnarray*}

In the subsequence of integers $n$ such that $m_n\geq 3$ define:
 \begin{eqnarray*}
a_n: &=& b(k^1_1k^1_2\dots k^1_{n-1}(k^1_n-1)),\\ 
b_n & =  & a(k^2_1k^2_k\dots(k^2_n+1)),\\ 
d_n&=& d(C,(a_n+b_n)/2) \text{  and  }\\
\gamma  &=& \limsup_{n\to\infty} \frac{\mu(n)}{k^2_n-k^1_n+1}\left(b_n-a_n-2 d_n\right)^t.
 \end{eqnarray*}
 \begin{theorem}\label{packing}
 With the preceding notation, if $m_n\leq M$ for all $n\geq1$ and \eqref{separation} is satisfied, the $\pac^t(C)=\max\{2^{t}\alpha,2^{t}\beta,\gamma\}$. 
\end{theorem}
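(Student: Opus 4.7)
My plan is to use the density characterization of packing measure due to Taylor and Tricot: if $\mu$ is a Borel probability measure supported on $C$, then
\[\pac^t(C) = \frac{\mu(C)}{\inf_{x \in C} \underline{D}(x)}, \qquad \underline{D}(x) := \liminf_{r \to 0^+}\frac{\mu(B(x,r))}{(2r)^t}.\]
Since $\mu(C) = 1$, the task reduces to proving $\inf_{x\in C}\underline{D}(x) = (\max\{2^t\alpha, 2^t\beta, \gamma\})^{-1}$.

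For the upper bound on $\pac^t(C)$, I would fix $x \in C$ and a small $r > 0$ and let $n = n(x,r)$ be the unique integer with $s_n < r \le s_{n-1}$. The ball $B(x,r)$ then meets $C$ in a union of consecutive basic intervals of order $n$ lying inside, or straddling the boundary of, the basic interval of order $n-1$ containing $x$. Using the separation condition \eqref{separation} and the cap $m_n \le M$, I will isolate three qualitative regimes up to asymptotically negligible error: the ball is anchored near a left endpoint of a basic interval of order $n-1$ and sweeps $k$ basic intervals of order $n$ to the right; the symmetric right-anchored case; or the ball is roughly centered on an interior gap of order $n$ and sweeps $k^2 - k^1 + 1$ basic intervals. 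In each regime $\mu(B(x,r))$ is $\mu(n)^{-1}$ times the number of basic intervals swept, while $2r$ is essentially the total gap-plus-interval length, and a direct computation bounds $\mu(B(x,r))/(2r)^t$ from below by $(2^t\alpha_n)^{-1}$, $(2^t\beta_n)^{-1}$, or approximately $\gamma_n^{-1}$ respectively. The factors of $2^t$ in the first two arise because only one half of the ball carries significant mass while the diameter is still $2r$. Taking $\liminf$ as $r \to 0$ yields $\underline{D}(x) \ge (\max\{2^t\alpha, 2^t\beta, \gamma\})^{-1}$ uniformly in $x$.

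For the matching lower bound I would construct, for each of the three quantities, a point of $C$ and a sequence of radii at which the density ratio converges to the reciprocal of that quantity. For $2^t\alpha$ pick a subsequence $(n_j)$ realizing $\alpha = \lim \alpha_{n_j}$, choose $x^\alpha \in C$ whose coding at level $n_j$ places it at the left endpoint of the optimizing $k_{n_j}$-block for every $j$, and use radii $r_j \approx k_{n_j} s_{n_j} + \sum_{i=1}^{k_{n_j}} g^{n_j}_i$; proceed symmetrically for $\beta$. For $\gamma$, the sequences $a_n$, $b_n$, $d_n$ already defined in the statement supply the centers and radii; extracting a cluster point $x^\gamma \in C$ of the midpoints $(a_n+b_n)/2$ gives a point at which $\underline{D}(x^\gamma) \le \gamma^{-1}$.

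The main obstacle I expect is the first step of the case analysis: ruling out, or reducing to the three listed regimes, the mixed configurations in which $B(x,r)$ straddles a boundary between two basic intervals of order $n-1$ and contains a right-block of one together with a left-block of the next. Here the separation condition \eqref{separation} and the uniform bound $m_n \le M$ enter most critically, since one must show that any such configuration is dominated, in density ratio, by a pure left, right, or center configuration at the same or an adjacent level. Once this reduction is in place, the rest is careful but routine optimization: the existence of maximizers $k_n$, $k^1_n$, $k^2_n$ is automatic because each max is over finitely many indices bounded by $M$.
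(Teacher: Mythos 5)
Your overall architecture (compute the lower density of $\mu$ and invert) is the right one, and your treatment of the upper bound for $\pac^t(C)$ --- the uniform bound $\underline{D}(x)\ge(\max\{2^t\alpha,2^t\beta,\gamma\})^{-1}$ via the left-anchored / right-anchored / gap-centered trichotomy --- matches the paper's Lemma \ref{lowbound} in statement and spirit, even if the reduction of straddling configurations that you flag as the main obstacle is exactly the part that needs the multi-level decomposition and the gap-comparison Lemma \ref{salto}. The genuine gap is in your lower bound for $\pac^t(C)$, and it begins with your opening identity. The Taylor--Tricot/Mattila density theorem does not give $\pac^t(C)=\mu(C)/\inf_{x\in C}\underline{D}(x)$ with a pointwise infimum; it gives $\Theta^t(\pac^t\llcorner_C,x)=1$ for $\pac^t$-a.e.\ $x$, so $\pac^t(C)$ is controlled by the \emph{essential} (almost-everywhere) behaviour of $\underline{D}$, not by its value at exceptional points. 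A single point $x^\alpha$ with $\underline{D}(x^\alpha)\le(2^t\alpha)^{-1}$ is a $\mu$-null set: applying the comparison theorem to $\{x^\alpha\}$ yields $\pac^t(C)\ge \mu(\{x^\alpha\})\cdot 2^t\alpha=0$. So the second half of your proposal, which constructs one witness point for each of $2^t\alpha$, $2^t\beta$ and $\gamma$, proves nothing about $\pac^t(C)$.

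What is actually required is that $\mu$-almost every $x$ sees the optimizing digit configuration at infinitely many scales. The paper obtains this by a Borel--Cantelli argument: along a subsequence $n_j$ realizing $\alpha$, it defines cylinder events $A_j$ forcing $\sigma(x)$ to follow the optimizing pattern on a window of length $J\approx \log j/\log M$, checks $\mu(A_j)\ge (jM)^{-1}$ so that $\sum_j\mu(A_j)=\infty$, arranges independence by taking $n_j$ sparse, and concludes that $A=\limsup_j A_j$ has full measure; the density estimate is then carried out at the radii $\rho_j$ adapted to each $x\in A$. This is also where the hypothesis $m_n\le M$ does its real work --- it is what makes $\mu(A_j)$ summably large --- whereas in your proposal $m_n\le M$ only enters through the trivial finiteness of the maximizations. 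Finally, before invoking any density theorem you need to record that $\pac^t(C)<\infty$ and that $\mu=\kappa\,\pac^t\llcorner_C$ for a constant $\kappa$; both are used implicitly in your displayed formula and are established (or cited) at the start of the paper's argument.
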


 When $m_n\geq 3$ for only finite many $n$, then $\gamma$ is not defined and is omitted in the Theorem.

\section{Hausdorff Measure}

In this section, we will compute the Hausdorff measure of the Cantor set. This estimate is  based on the local behaviour of the measure $\mu$ expressed by a generalization of the mass distribution principle instead of the upper density (see Lemma \ref{Frostman} below). We follow the approach of \cite{QRS03,PP13}.

Recall that  $s:=\liminf_{n\to\infty}\frac{\log \mu(n)}{|\log s_n|}$ and define \(B_s:=\liminf_{n\to\infty}\mu(n) s_n^s\). 

\begin{lemma}\label{Frostman}
Under the hypothesis of Theorem \ref{Hausdorff}.

For any $\varepsilon>0$ there is an $n_0$ such that $$\mu(P)\leq (B_s-\varepsilon)^{-1}|P|^s$$ for any simple interval $P$ of order $n> n_0$ contained in a basic interval of order $n_0$, where $|P|$ indicates the diameter of $P$.
\end{lemma}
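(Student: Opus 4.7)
The strategy is an induction on the structure of $P$, anchored by the definition of $B_s$ and closed by hypothesis \eqref{cond2}. Given $\varepsilon>0$, first choose $n_0$ large enough that $\mu(m)s_m^s \geq B_s-\varepsilon$ for every $m \geq n_0$; this is possible because $B_s = \liminf_{n\to\infty}\mu(n)s_n^s$. For any basic interval $I_\sigma$ with $|\sigma|=m\geq n_0$ this already yields
\[
\mu(I_\sigma) = \frac{1}{\mu(m)} \leq \frac{s_m^s}{B_s-\varepsilon} = \frac{|I_\sigma|^s}{B_s-\varepsilon},
\]
which will serve as the base case, applicable to any basic interval of order $m \geq n_0$.

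For a general simple interval $P$ of order $n>n_0$ contained in a basic interval of order $n_0$, I would induct on the number $k$ of basic intervals of order $n$ that compose $P$. If $k=1$ the base case applies directly. If $k\geq 2$, let $j\in\{n_0,\ldots,n-1\}$ be the largest level at which $P$ is still contained in a single basic interval of order $j$, and let $Q_1,\ldots,Q_K$ (with $K\geq 2$) be the consecutive basic intervals of order $j+1$ that $P$ meets inside this common order-$j$ ancestor. The interior ones $Q_2,\ldots,Q_{K-1}$ are full basic intervals of order $j+1\geq n_0+1$ and are handled by the base case. The extreme pieces $P_1:=P\cap Q_1$ and $P_K:=P\cap Q_K$ are simple intervals of order $n$ each contained in a basic interval of order $j+1$ and each built from strictly fewer than $k$ basic intervals of order $n$, so the inductive hypothesis applies to them. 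Summing the three contributions reduces the proof to the geometric inequality
\[
|P_1|^s + (K-2)s_{j+1}^s + |P_K|^s \leq |P|^s,
\]
where $|P| = |P_1|+(K-2)s_{j+1}+|P_K|+G$ and $G = \sum_{i=a}^{a+K-2}g^{j+1}_i$ is the total length of the level-$(j+1)$ gaps inside $P$.

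The last inequality is the crux and is where \eqref{cond2} becomes decisive. Because $0<s\leq 1$, the map $x\mapsto x^s$ is subadditive, so the sum of $s$-th powers on the left exceeds $(|P_1|+(K-2)s_{j+1}+|P_K|)^s$; one must show the extra length $G$ supplied by the gaps is enough to pay for this deficit. Rewriting \eqref{cond2} equivalently as
\[
(\#J_1+1)\sum_{t\in J_2} g^n_t \leq \#J_2\Bigl(g^{n-1}_\ell + \sum_{t\in J_1} g^n_t\Bigr),
\]
and applying it with $n$ replaced by $j+1$, with $J_1,J_2$ chosen as the index sets of the level-$(j+1)$ gaps sitting inside the more- and less-populated of $P_1, P_K$ respectively, and with $g^{n-1}_\ell$ an appropriate level-$j$ comparison gap, gives a lower bound on $G$ of exactly the magnitude required. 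Combining that bound with the concavity of $x\mapsto x^s$ then produces the displayed geometric inequality and closes the induction. The algebraic matching of the combinatorial factor $\#J_2/(\#J_1+1)$ in \eqref{cond2} with the concavity deficit of $x^s$ is the main obstacle; the rest is organizational.
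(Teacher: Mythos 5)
Your setup (choice of $n_0$, base case for basic intervals) matches the paper, but the inductive step rests on a reduction that does not hold under the stated hypotheses. After decomposing $P$ into $P_1,Q_2,\dots,Q_{K-1},P_K$ and applying the measure bound only at level $j+1$, you are forced to prove the measure-free inequality $|P_1|^s+(K-2)s_{j+1}^s+|P_K|^s\leq |P|^s$. Specialize to $P_1=Q_1$, $P_K=Q_K$: the inequality becomes $Ks_{j+1}^s\leq\bigl(Ks_{j+1}+\sum g^{j+1}_i\bigr)^s$, i.e.\ $d^{j+1}_{i+K-1}-d^{j+1}_{i}\geq K^{1/s}-1$. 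That is precisely the Pedersen--Phillips separation condition from \cite{PP13}, which Theorem \ref{Hausdorff} deliberately does not assume: \eqref{separation} only gives $d^n_{j+1}-d^n_j\geq c$ for some $c>1$, and \eqref{cond2} only compares level-$n$ gaps with level-$(n-1)$ gaps (it yields, e.g., $\sum_t g^n_t\leq (m_n-1)g^{n-1}_j$); neither bounds gaps from below against $s_{j+1}$. In a non-self-similar construction the quantity $\mu(m)s_m^s$ oscillates, and at a level where it sits well above $B_s$ your geometric inequality can fail even though the lemma's conclusion (which carries the factor $(B_s-\varepsilon)^{-1}$ and hence the slack $\mu(j+1)^{-1}\leq(B_s-\varepsilon)^{-1}s_{j+1}^s$ with room to spare) remains true. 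In short, the reduction throws away exactly the measure-theoretic slack that makes the statement provable in the generality claimed.

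The paper's proof avoids this by never passing to a purely geometric inequality: it inducts on the \emph{order} $n$ rather than on the number of constituent intervals, writes $|P|$ as a convex combination
\begin{equation*}
|P|=\lambda\bigl(b(\hat\tau)-x\bigr)+(1-\lambda)\bigl(b(\tau|n-1)-x\bigr),
\end{equation*}
of the lengths of two simple intervals of order $n-1$ (one contained in $P$, one containing $P$), and uses concavity of $x\mapsto x^s$ together with the inductive hypothesis applied to those order-$(n-1)$ intervals. Condition \eqref{cond2} enters only to show $\lambda\leq 1-\tau_n/m_n$, i.e.\ to balance the weights of the convex combination against the ratio $\mu(n-1)/\mu(n)=1/m_n$ of the measures. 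This interpolation between scales $n$ and $n-1$ is the step your induction cannot reproduce, since the larger interval $[x,b(\tau|n-1)]$ contains more order-$n$ pieces than $P$ and is therefore out of reach of an induction on the number of pieces. To repair your argument you would either have to add the $(1+i-j)^{1/s}-1$ separation hypothesis (weakening the theorem to the setting of \cite{PP13}) or restructure the induction along the paper's lines.
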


\begin{proof}
Given $\varepsilon>0$ pick $n_0$ such that $\mu(n)s_n^s\geq (B_s-\varepsilon)$ for all $n\geq n_0$. 

If $P$ is basic of order $n\geq n_0$, say $P=I_\sigma$ for some $\sigma\in \Sigma_n$ then 
\[\mu(I_\sigma)=\mu(n)^{-1}\leq (B_s-\varepsilon)^{-1}s_n^s=(B_s-\varepsilon)^{-1}|I_\sigma|^s.\]
the thesis is valid. 

If $P$ is simple but not basic, the proof will be by induction on the order of $P$, say $n$.

\textit{Case 1: $n=n_0+1$.} Suppose $P$ is simple of order $n=n_0+1$ is not basic but is contained in a basic interval of order $n_0$. Then $P=[x,y]$ with $x=a(\sigma)$ and $y=b(\tau)$ with $\sigma,\tau\in \Sigma_n$ and $\sigma|n_0=\tau|n_0$.  Put $i:=\tau_n-\sigma_n$. Since $P$ is not basic, $i\geq 2$. We have:
\begin{eqnarray*}
 |P| &=& s_n\left(1+d_{\tau_n}-d_{\sigma_n}\right)=\\
 &=& s_{n_0} \left(1+r_n\left(d_{\tau_n}-d_{\sigma_n} \right)-(1-r_n)-r_n\right)\\
 &=& s_{n_0}\left(\frac{i}{m_n-1}+r_n\frac{m_n-1-i}{m_n-1}+A\right),
\end{eqnarray*}
where \begin{eqnarray*}
A  &=& \frac{m_n-1-i}{m_n-1}-r_n\frac{m_n-1-i}{m_n-1}+r_n\left(d_{\tau_n}-d_{\sigma_n} \right)-(1-r_n)-r_n\\
&=& \frac{m_n-1-i}{m_n-1} (1-r_n)+r_n(ci-1+r_n)\geq 0.
\end{eqnarray*}
Then, using concavity and the definition of $n_0$,
\begin{eqnarray*}
 |P|^s &\geq &s_n^s\left(\frac{m_n-1-i}{m_n-1}\right)+s_{n_0}^s\frac{i}{m_n-1}\\
 &\geq & (B_s-\varepsilon)\mu(n)^{-1}\left(\frac{m_n-1-i}{m_n-1}+\frac{i}{m_n-1}m_n
 \right)\\
 & = & (B_s-\varepsilon)\mu(n)^{-1}(i+1).
\end{eqnarray*}

\textit{Case 2: Inductive hypothesis} Assume that the thesis is valid for $n-1$ and we'll try to prove for $n$. 

If $P$ is a simple interval of order $n$ contained in a basic interval of order $n-1$, the proof is similar to case 1. So, we can assume $P$ is a simple interval of order $n$ not contained in a basic interval of order $n-1$. Say $P=[x,y]$ with $x=a(\sigma)$, $y=b(\tau)$ for some $\sigma,\tau\in \Sigma_n$ and $\sigma|n-1\neq \tau|n-1$. 

\textit{Case 2a: $\sigma_n=1$} In this case, $a(\sigma)=a(\sigma|n-1)$. Observe that $\tau_n\neq m_n$, otherwise $P$ would be of order $n-1$ or lower. Define $\hat\tau\in D_{n-1}$ such that $\hat\tau|n-2=\tau|n-2$ and $\hat\tau_{n-1}=\tau_{n-1}$, what means that $I_{\hat\tau}$ is the basic interval of order $n-1$ to the left of $I_{\tau|n-1}$. 

Now define $\lambda:=\displaystyle\frac{b(\tau|n-1)-b(\tau)}{b(\tau|n-1)-b(\hat\tau)}$.We have $0<\lambda<1$. 

Note that, for some $j=1,\dots,m_{n-1}-1$, we have:
\begin{eqnarray*} 
 \lambda &=& \frac{(m_n-\tau_n)s_n+\sum_{t=\tau_n}^{m_n-1}g^n_t}{m_ns_n+g^{n-1}_{j}+\sum_{t=1}^{m_n-1}g^n_t}\\
 &\leq & \max\{\frac{m_n-\tau_n}{m_n},\frac{\sum_{t=\tau_n}^{m_n-1}g^n_t}{g^{n-1}_{j}+\sum_{t=1}^{m_n-1}g^n_t},\}
 \end{eqnarray*}
so by \eqref{cond2} we get $\lambda\leq 1-\frac{\tau_n}{m_n}$.

Now, using the definition of $\lambda$ and the concavity of the exponential:
\begin{eqnarray*}
|P|^s&=& (y-x)^s=\left(\lambda(b(\hat\tau)  -x)+(1-\lambda)(b(\tau|n-1)-x)\right)^s\\
   &\geq& \lambda( b(\hat\tau) -x)^s+(1-\lambda)(b(\tau|n-1)-x)^s.
\end{eqnarray*}
Note that the intervals $[x,b(\tau|n-1)]$ and $[x,b(\hat\tau) ]$ are simple of order $n-1$ and we can use the inductive hypothesis. So:
\begin{eqnarray*}
|P|^s &\geq &    (B_s-\varepsilon)\left(\lambda\mu([x,b(\hat\tau) ])+(1-\lambda)\mu([x,b(\tau|n-1])\right)\\
& = & (B_s-\varepsilon)\left( \mu([x,b(\hat\tau) ])+(1-\lambda)\mu([b(\hat\tau) ,b(\tau|n-1])\right).
\end{eqnarray*}
Since $\mu([b(\hat\tau),b(\tau|n-1])=\mu(n-1)^{-1}$ and $1-\lambda \geq \frac{\tau_n}{m_n}$ we conclude that
\[|P|^s\geq (B_s-\varepsilon) \left( \mu([x,b(\hat\tau)])+\frac{\tau_n}{\mu(n-1) m_n}\right)=(B_s-\varepsilon)\mu([x,y]).\] 

\textit{case 2b:$\sigma_n\neq 1$} It is possible to construct a simple interval $Q$ of the same order as $P$, with the same number of basic intervals of order $n$ inside, with $|Q|\leq |P|$ and such that the left end point of $Q$ coincides with the left end point of an interval of order $n-1$. 

Suppose $\sigma_n>\tau_n$. The left point of $Q$ will be $a(\sigma')$ where $\sigma'\in\Sigma_{n-1}$ is such that $I_{\sigma'}$ is to the right to $I_{\sigma|n-1}$. The right point of $Q$ will be $b(\tau')$ where $\tau'\in \sigma_n$ is such that $\tau'|n-1=\tau|n-1$ and $\tau_{n}'=\tau_{n}+m_n-\sigma_n$. 

If $\sigma_n\leq \tau_n$, take $Q=[a(\sigma),b(\tau')]$ where $\tau'\in \Sigma_n$ is such that $\tau'|n-1=\tau|n-1$ and $\tau_{n}'=\tau_{n}-\sigma_n+1$.

See Lemma 5.2 in \cite{PP13} for details. 

That is, $Q$ is in case 2a and $\mu(Q)=\mu(P)$. Then, 
\[|P|^s\geq |Q|^s\geq (B_s-\varepsilon)\mu(Q)=(B_s-\varepsilon)\mu(P).\] 
\end{proof}

Now, Theorem \ref{Hausdorff} is a corollary of the previous Lemma \ref{Frostman}. 

\begin{proof}[Proof of Theorem \ref{Hausdorff}]
Since at each step, basic intervals are a cover of $C$, it is immediate that $\hau^s_\delta(C)\leq \mu(n)s_n^s$, if $s_n\leq \delta$. In consequence, $\hau^s(C)\leq B_s$. 

To prove the opposite inequality, pick $(U_i)_{i\geq 1}$ a $\delta$-cover of $C$. Without loss of generality, we can assume that each $U_i$ is contained in a simple interval $P_i$ with $\inf P_i=\inf U_i$ and $\sup P_i=\sup U_i$. Fix $\varepsilon>0$ and let $n_0$ as in Lemma \ref{Frostman}. If $\delta$ is small enough, $P_i$ can be assumed to be contained in a basic interval of order $n_0$. So, 
\[1=\mu(C)\leq \sum_{i}\mu(U_i)\leq \sum_i \mu(P_i) \leq (B_s-\varepsilon)^{-1}\sum |P_i|^s=(B_s-\varepsilon)^{-1}\sum |U_i|^s.\]
Since $(U_i)$ was arbitrary, we can take infimum over all possible covering and then let $\delta$ tends to zero, which gives the needed inequality $\hau^s(C)\geq B_s-\varepsilon$ for any $\varepsilon>0$.

\end{proof}

\begin{remark}
\begin{enumerate}
\item If $D_n=\{j\cdot d_n:j=0,1,\dots,m_n-1 \}$ for $d_n=\frac{1-r_n}{r_n(m_n-1)}$, then condition \ref{cond2} is equivalent to decreasing gaps. If we assume this condition of decreasing gaps, then we can not have any gap with length zero, and then condition \ref{separation} is also satisfied. Therefore, the main result in \cite{QRS03} is a corollary of Theorem \ref{Hausdorff}. 
\item In \cite{PP13}, Hausdorff measure is computed under the hypothesis:
\[m_nr_n^s<1\ \ \text{and} \ \ \  d_i^n-d_j^n\geq \max\{2,(1+i-j)^{1/s}-1\}.\]
These hypothesis are satisfied under the assumptions of Theorem \ref{Hausdorff}. So, Theorem 2.3 in \cite{PP13} can be obtained as a corollary of Theorem \ref{Hausdorff}
\end{enumerate}
\end{remark}

\section{Packing Measure}

The computation of the Packing measure is also based on the local behaviour of the measure $\mu$, but instead of proving a mass distribution principle we will compute the lower local density (for definition, see \ref{density} below)
of the measure $\mu$, the natural measure supported on the Cantor set. 

First, note that $\mu$ and $\pac^t\llcorner_C$, the restriction of the Packing measure to the Cantor set $C$, are closely related. In fact, for $\sigma\in\Sigma_k$, $\mu(I_\sigma)=\mu(k)^{-1}$ and  if $\pac^t(C)<\infty$, $\pac\llcorner_C(I_\sigma)$ only depends on $k$, and in consequence, there is a constant $\kappa$ such that $\mu=\kappa \pac^t\llcorner_C$. 

 Recall the definition of lower density of a measure. 
 \begin{definition}\label{density}
  If $\nu$ is a measure on $\R^d$ and $\alpha\geq 0$, the $\alpha$-lower density of $\nu$ at $x\in\R^d$ is:
  \(\Theta^\alpha(\nu,x):=\liminf_{r\to 0^+}(2r)^{-\alpha}\nu(B(x,r))\).
 \end{definition}
If $\pac^\alpha(A)<\infty$, then we have $\Theta^\alpha(\pac^\alpha\llcorner_A,x)=1$ for $\pac^\alpha$-almost every $x\in A$ (see Theorem 6.10 in \cite{Mat95}). 
In consequence, $\Theta^t(\mu,x)=\kappa \Theta^t(\pac\llcorner_C,x)=1$, and $\pac^t(C)=\Theta^t(\mu,x)^{-1}$ ( if $\pac^s(C)<\infty$).

 The proof of Theorem \ref{packing} is then reduced to the computation of the lower density of $\mu$. We accomplish this separately in two parts. First, we prove the lower bound, which is valid under more general conditions. Next, the upper bound is established under the assumptions of Theorem \ref{packing}.
 
 First, we will prove a technical Lemma that will be used in the proofs.
 
 \begin{lemma}\label{salto}
  Under the hypothesis of Theorem \ref{packing}, there is an integer $L$ such that:
  \[k_ns_n+\sum_{i=1}^{k_n}g^n_i\leq g^{n-\ell}_t,\]
for all $n\geq 1$, $k_n\leq m_n-1$, $1\leq t \leq m_{n-\ell}$ and $1\leq \ell\leq L$.  \end{lemma}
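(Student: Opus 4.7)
The plan is to collapse the left-hand side algebraically via \eqref{gaps}, bound the right-hand side below using \eqref{separation}, reduce the range $1\leq\ell\leq L$ to the single worst case $\ell=1$ by a monotonicity observation, and then handle $\ell=1$ using the argmax definition of $k_n$ together with the hypothesis $m_n\leq M$.

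By \eqref{gaps} the sum telescopes:
\[
k_n s_n+\sum_{i=1}^{k_n}g^n_i \;=\; s_n\Bigl(k_n+\sum_{i=1}^{k_n}(d^n_{i+1}-d^n_i-1)\Bigr) \;=\; s_n\,d^n_{k_n+1},
\]
using $d^n_1=0$. Geometrically this is the distance from the left endpoint of the first order-$n$ child to the left endpoint of the $(k_n+1)$-st, inside their common parent of order $n-1$. From \eqref{separation}, $g^{n-\ell}_t\geq s_{n-\ell}(c-1)$ for every admissible $t$, so the claim reduces to $s_n d^n_{k_n+1}\leq s_{n-\ell}(c-1)$. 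Since the left side does not depend on $\ell$ while $s_{n-\ell}$ is non-decreasing in $\ell$, the case $\ell=1$ is the tightest; once it is settled, every $\ell\in\{1,\dots,L\}$ is automatic and $L$ may be chosen as any positive integer.

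For $\ell=1$ the target becomes $r_n d^n_{k_n+1}\leq c-1$. The naive bound $d^n_{k_n+1}\leq d^n_{m_n}=(1-r_n)/r_n$ yields only $r_n d^n_{k_n+1}\leq 1-r_n$, which suffices when $c\geq 2$ but not in the full range. To close the gap I would exploit that $k_n$ \emph{maximizes} $\mu(n)(s_n d^n_{k+1})^t/k$ over $k\in\{1,\dots,m_n-1\}$. Under the hypotheses of Theorem~\ref{packing} the packing measure is finite, so $\alpha=\limsup_n\alpha_n<\infty$ and there is a uniform constant $K$ with $\alpha_n\leq K$. Combining the argmax identity $\alpha_n=\mu(n)(s_n d^n_{k_n+1})^t/k_n$ with the lower estimate $\alpha_n\geq\mu(n)s_n^t(d^n_2)^t\geq\mu(n)s_n^t c^t$ (taking $k=1$ in the maximum and using $d^n_2\geq c$) and with the cap $k_n\leq M-1$, one extracts a uniform ratio comparison of the form $d^n_{k_n+1}/d^n_{m_n}\leq (c-1)/(1-r_n)$, which rearranges via $r_n d^n_{m_n}=1-r_n$ to the desired $r_n d^n_{k_n+1}\leq c-1$.

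The main obstacle is exactly this last quantitative step. The generic upper bound on $d^n_{k_n+1}$ is too weak; the proof has to genuinely use the \emph{argmax} property of $k_n$ (not merely the scalar estimate $\alpha_n\leq K$) coupled with the cap $m_n\leq M$ to sharpen $1-r_n$ down to $c-1$ uniformly in $n$. Once the $\ell=1$ case is secured, the monotonicity observation of the previous paragraph delivers the full range $1\leq\ell\leq L$ at no extra cost, and $L$ is then any positive integer.
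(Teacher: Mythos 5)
Your reduction of the left-hand side to $s_n d^n_{k_n+1}\le s_n d^n_{m_n}=s_{n-1}(1-r_n)$ and of the right-hand side to $g^{n-\ell}_t\ge s_{n-\ell}(c-1)$ is correct and matches the paper's computation. The gap is in the last step, and it is not merely an unfinished computation: the inequality you isolate at $\ell=1$, namely $r_n d^n_{k_n+1}\le c-1$, is false in general, and no appeal to the argmax property of $k_n$ or to $m_n\le M$ can rescue it. Take $m_n=2$: then $k_n=1$ is forced, the left-hand side equals $s_n+g^n_1=s_{n-1}(1-r_n)>s_{n-1}/2$, while a gap at level $n-1$ may have length exactly $s_{n-1}(c-1)$; for any $c<3/2$ the claimed inequality fails. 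So the route you sketch in your third paragraph is a dead end, and your own admission that the ``last quantitative step'' is the obstacle is accurate.

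The resolution is that the quantifier in the statement is a typo: as the paper's own proof and its applications in Lemma \ref{alpha} (where only $g^{n-L}$ and gaps at levels $n-\ell$ with $\ell\ge L$ are invoked) make clear, the inequality is meant to hold for $\ell\ge L$, not for $1\le\ell\le L$. Your monotonicity observation then cuts the other way: $s_{n-\ell}$ increases with $\ell$, so the inequality becomes easier as one goes up levels, and the point is to go up far enough. Concretely, $s_{n-1}(1-r_n)\le s_{n-\ell}(c-1)$ is equivalent to $(1-r_n)\,r_{n-1}\cdots r_{n-\ell+1}\le c-1$, and since every $r_i<1/2$ the left side is at most $2^{-(\ell-1)}$; hence $L=1+\lceil\,|\log(c-1)|/\log 2\,\rceil$ works uniformly in $n$ for every $\ell\ge L$. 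With the corrected quantifier, your reduction of both sides plus this one line is exactly the paper's argument.
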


\begin{proof}
 For the left hand side, we have:
 \begin{eqnarray*}
  k_ns_n+\sum_{i=1}^{k_n}g^n_i &= &
  s_{n-1}-s_n\sum_{i=k+1}^{m_n-1}(d^n_i-d^n_{i-1}-s_n\\
  &\leq& s_{n-1}\left(1-r_nc(m_n-1-k)-r_n\right)\\
  &\leq& s_{n-1}(1-r_n).
 \end{eqnarray*}
On the other side, 
\begin{equation*} 
g^{n-\ell}_ts_{n-\ell}(c-1)\geq s_{n-L}(c-1).\end{equation*} So, 
 it is enough to see that $s_{n-1}(1-r_n)\leq s_{n-L}(c-1)$. Equivalently, $(1-r_n)r_{n-1}r_{n-2}\cdots r_{n-L+1}\leq c-1.$ Since $r_n\leq 1/2$, $L=[|\log (c-1)|/\log 2]$ works. 
\end{proof}

Now, we prove the lower bound.
\begin{lemma}\label{lowbound}
 For every $x\in C$, $\Theta^t(\mu,x)\geq \min\{2^{-t}\alpha^{-1},2^{-t}\beta^{-1},\gamma^{-1}\}$. 
\end{lemma}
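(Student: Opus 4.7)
The plan is to prove the equivalent upper bound
\[\limsup_{r\to 0^+}\frac{(2r)^t}{\mu(B(x,r))}\leq \max\{2^t\alpha,\,2^t\beta,\,\gamma\}\]
for every $x\in C$. Write $\sigma=\sigma(x)$. The strategy is to identify, for each small $r>0$, an appropriate level $n=n(r)$ and a canonical sub-region $E_r\subseteq B(x,r)$ --- a left-aligned, right-aligned, or centered union of level-$n$ children of $I_{\sigma|n-1}$ --- whose ratio $|E_r|^t/\mu(E_r)$ is one of the terms defining $\alpha_n$, $\beta_n$ or $\gamma_n$. Whenever we can also guarantee $|E_r|\geq r$, the elementary estimate
\[\frac{(2r)^t}{\mu(B(x,r))}\leq \frac{(2r)^t}{\mu(E_r)}\leq \frac{(2|E_r|)^t}{\mu(E_r)}=2^t\frac{|E_r|^t}{\mu(E_r)}\]
explains the factor $2^t$ attached to $\alpha$ and $\beta$; in the centered case, the maximal radius $r$ realising a ball centred at a point of $C$ inside the centered block satisfies $2r\leq b_n-a_n-2d_n$ while $\mu(B(x,r))=(k_n^2-k_n^1+1)/\mu(n)$, giving $\gamma_n$ directly without an extra factor.

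Concretely, I take $n=n(r)$ to be the smallest integer with $I_{\sigma|n}\subseteq B(x,r)$ (so $I_{\sigma|n-1}\not\subseteq B(x,r)$) and distinguish three cases according to how $B(x,r)$ sits inside $I_{\sigma|n-1}$. Case (i): $B(x,r)$ protrudes out of $I_{\sigma|n-1}$ only on the right, so $B(x,r)\cap I_{\sigma|n-1}$ is a right-aligned region of total length between $r$ and $2r$; take $E_r$ to be the union of those children of $I_{\sigma|n-1}$ that are fully contained in $B(x,r)$ and bound by $\beta_n$. Case (ii) is symmetric and bounded by $\alpha_n$. Case (iii): $B(x,r)\subseteq I_{\sigma|n-1}$, so $B(x,r)$ covers a centered block of children strictly inside $I_{\sigma|n-1}$, whose worst case is captured exactly by $\gamma_n$. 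Letting $r\to 0^+$ forces $n(r)\to\infty$, and the case-by-case bounds pass to the $\limsup$ to yield the three candidates $2^t\alpha$, $2^t\beta$, $\gamma$.

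The main obstacle is the endpoint analysis in cases (i) and (ii): the inequality $|E_r|\geq r$ can fail when $x$ is unfavourably placed inside the level-$n$ child containing it, because $B(x,r)$ may extend partway into the gap $g^n_{k^1-1}$ or even into the adjacent child. Lemma \ref{salto} is tailor-made for this: it implies that the total length of a block of children at level $n$ together with the gaps between them is dominated by a single gap at some coarser level within $\{n-L,\dots,n-1\}$, so the deficit in $|E_r|$ can be absorbed by shifting the reference level by at most a fixed number $L$ of steps, which does not affect the $\limsup$. The uniform bound $m_n\leq M$ from Theorem \ref{packing} keeps the combinatorics finite. A secondary subtlety is verifying that the trichotomy really exhausts all geometries when $x$ is close to the extreme left or right child of $I_{\sigma|n-1}$, so that the boundary indices $k=1$ of $\alpha_n$ and $k=m_n$ of $\beta_n$ cover what the strictly interior centered quantity in $\gamma_n$ does not.
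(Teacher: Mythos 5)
Your overall architecture (three cases left/right/centered, the factor $2^t$ coming from $2r\leq 2|E_r|$, and the absence of that factor for $\gamma$ because the centre of the ball lies in $C$ and is therefore at distance at least $d_n$ from the midpoint of the central gap) matches the paper's proof in outline. But there is a genuine gap at the key technical step, namely the reduction to a single scale. When $B(x,r)$ contains the children $I_{\tau 1},\dots,I_{\tau J}$ of $I_\tau$ (with $\tau=\sigma(x)|n_0-1$) and its right endpoint falls strictly inside $I_{\tau(J+1)}$, your region $E_r$ (the union of fully contained children) can have $|E_r|$ much smaller than $r$: with $x=a(\tau)$ and $r$ just below $b(\tau 2)-a(\tau)$ one gets $E_r=I_{\tau 1}$, $|E_r|=s_{n_0}$, while $r\approx 2s_{n_0}+g^{n_0}_1$, and the gap $g^{n_0}_1$ is not uniformly comparable to $s_{n_0}$. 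The deficit cannot be ``absorbed by shifting the reference level by at most $L$ steps'': enlarging $E_r$ to a coarser-level block long enough to dominate $r$ destroys the containment $E_r\subseteq B(x,r)$, so the inequality $\mu(B(x,r))\geq\mu(E_r)$ that you need would fail; and Lemma \ref{salto} bounds a block of level-$n$ children by a single coarser gap, which is the tool for the \emph{upper} bound (showing a ball captures no extra mass), not for repairing a length deficit here. The paper's remark even notes that the lower bound does not use $m_n\leq M$, so your reliance on that hypothesis is a further sign the argument is not on the intended track.

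What is actually needed, and what the paper does, is a multi-scale decomposition of the overhang: set $j_{n_0}=J$ and recursively $j_k=\max\{j: I_{\tau J j_{n_0+1}\cdots j_k}\subset B(x,r)\}$, so that $r\leq\sum_{i=n_0}^{n_1}\bigl(j_i s_i+\sum_{\xi=1}^{j_i} g^i_\xi\bigr)$ while $\mu(B(x,r))\geq \sum_{i=n_0}^{n_1} j_i\,\mu(i)^{-1}$, and then apply the elementary inequality $\frac{\sum_i a_i}{(\sum_i b_i)^t}\geq\min_i \frac{a_i}{b_i^t}$ (valid because $t\leq 1$, so $(\sum_i b_i)^t\leq\sum_i b_i^t$) to reduce to the single-scale ratios, each of which is at least $\alpha^{-1}-\varepsilon$. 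A similar issue arises in your centered case (iii): even when $B(x,r)\subseteq I_{\sigma|n-1}$, the ball may protrude partway into the two children adjacent to the central block, contributing mass at all finer scales on both sides and making $2r$ as large as $b_n-a_n$ rather than $b_n-a_n-2d_n$. The paper rules this out by a contradiction argument: if either endpoint enters an adjacent child, splitting the ball at that child's boundary reduces the estimate to the $\alpha$/$\beta$ cases, contradicting the standing assumption that the density ratio is below $\min\{2^{-t}\alpha^{-1},2^{-t}\beta^{-1}\}$. This is precisely why the statement is a minimum of the three candidates and why $\gamma$ only needs to be effective when it is the smallest. Without these two ingredients your case analysis does not close.
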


\begin{proof}
Given $\varepsilon>0$ there is $N$ such that if $n\geq N$ then:
 \begin{align*}
 \alpha^{-1} -\varepsilon &\leq \frac{k}{\mu(n)\left(ks_n+\sum_{i+1}^kg^n_i\right)^t}\qquad &&\forall 1\leq k\leq m_n-1\\
 \beta^{-1}-\varepsilon &\leq  \frac{m_n-k+1}{\mu(n)\left( (m_n-k+1)s_n+\sum_{i=k-1}^{m_n-1} g^n_i\right)^t} \qquad&&\forall 2\leq k\leq m_n\\
 \gamma^{-1}-\varepsilon &\leq  
 \frac{k^2-k^1+1}{\mu(n)\left((k^2-k^1+1)s_n+\sum_{i=k^1-1}^{k^2}g_i^n\right)^t}&&\forall 1<k^1<k^2<m_n.
\end{align*}
 Fix $x\in C$ and $r>0$. There is an $n_0$ such that
 \begin{equation}\label{fixword}
\exists \sigma\in\Sigma_{n_0}: I_\sigma\subseteq B(x,r)\qquad \text{ and } I_\tau \nsubseteq B(x,r)\quad \forall \tau\in\Sigma_n\text{ with },n<n_0. 
\end{equation}
 Ir $r$ is taken small enough, then we can have $n_0\geq N$. Note that there are at most $2m_{n_0}-2$ words satisfying \eqref{fixword} since there are at most two words $\tau\in \Sigma_{n_0-1}$ such that $I_\tau\cap B(x,r)\neq \emptyset. $
 
 We will distinguish between three cases:\newline
 case 1: There is $\sigma\in\Sigma_{n_0}$ satisfying \eqref{fixword} such that $\sigma_{n_0}=1$,\newline
 case 2: there is $\sigma\in\Sigma_{n_0}$ satisfying \eqref{fixword} such that $\sigma_{n_0}=m_{n_0}$ and\newline
 case 3: none of the above conditions are satisfied.

 {\bf Case 1}. We will call $\sigma$ the word satisfying \eqref{fixword} such that $\sigma_{n_0}=1$ and $\tau=\sigma|n_0-1$. Also define:
 \[J=\max\{1\leq j<m_{n_0}:I_{\tau j}\subset B(x,r)\}.\]
 If $r\leq Js_{n_0}+\sum_{i=1}^Jg^{n_0}_i$, then
 \[
 \frac{\mu(B(x,r))}{(2r)^t}
 \geq
 \frac{J} {\mu(n) 2^t\left( Js_{n_0}+\sum_{i=1}^Jg^{n_0}_i\right)^t}\geq 2^{-t}(\alpha^{-1}-\varepsilon).
 \]
 
 Suppose now, $r>Js_{n_0}+\sum_{i=1}^Jg^{n_0}_i$. In this case, $B(x,r)$ contains a portion of $I_{\tau (J+1)}$. To estimate the measure of this portion, define a sequence $(j_k)_{k\geq n_0}$ inductively, starting by $j_{n_0}=J$ and then:
 \begin{equation}\label{defseq}
 j_k=\max\{1\leq j\leq m_k-1:I_{\tau J j_{n_0+1}\dots j_k}\subset B(x,r)
 \}.
 \end{equation}
 If $x+r\notin C$, then there is $n_1=\min\{k:x+r\notin I_{\tilde\sigma}\forall\tilde\sigma\in\Sigma_k\}$. 
In this case, $x+r\leq a(\tau)+\sum_{i=n_0}^{n_1}(j_is_i+\sum_{\xi=1}^{j_i}g_\xi^i)$ and (since $a(\tau)\leq x$) we have
\begin{eqnarray*}
\frac{\mu(B(x,r))}{(2r)^t}
 &\geq&
\frac{ \sum_{i=n_0}^{n_1}(j_i\mu(i)^{-1})}  {2^t\left( \sum_{i=n_0}^{n_1}(j_is_i+\sum_{\xi=1}^{j_i}g_\xi^i  \right)^t }\\
& \geq& 
2^{-t}\min_{n_0\leq i \leq n_1} \frac{j_i\mu(i)^{-1}}{\left( j_is_i+\sum_{\xi=1}^{j_i}g^i_\xi  \right)^t}\geq 2^{-t}(\alpha^{-1}-\varepsilon). 
 \end{eqnarray*}
If $x+r\in C$, then the sequence in \eqref{defseq} is infinite. But we still have for any $n_1>n_0$, 
\[
x+r\leq b(\tau j_{n_0}j_{n_0+1}\dots j_{n_1})= a(\tau)+\sum_{i=n_0}^{n_1}(j_is_i+\sum_{\xi=1}^{j_i}g_\xi^i)+s_{n_1}. 
\]
Since we also have $a(\tau)\leq x$ and $s_{n_1}\to 0$ when $n_1\to\infty$, in a similar way, for $n_1$ large enough, we obtain:
\[
\frac{\mu(B(x,r))}{(2r)^t}
 \geq
 2^{-t}(\alpha^{-1}-2\varepsilon). 
 \]

 {\bf Case 2} In a similar way, we obtain $
\displaystyle \frac{\mu(B(x,r))}{(2r)^t}
 \geq
 2^{-t}(\beta^{-1}-2\varepsilon).$
 
 {\bf Case 3} Suppose that $I_\sigma\subset B(x,r)$ with $\sigma\in\Sigma_{n_0}$ implies that $\sigma_{n_0}$ is not either 1 or $m_{n_0}$. If 
 $\liminf_{n\to\infty}
\frac{\mu(B(x,r))}{(2r)^t}
 \geq \min\{2^{-t}\alpha^{-1},2^{-t}\beta^{-1}\}$, the proof is concluded. Then assume $\liminf_{n\to\infty}\frac{\mu(B(x,r))}{(2r)^t}
<\min\{2^{-t}\alpha^{-1},2^{-t}\beta^{-1}\}$. 
Define:
\[j=\min\{1\leq i\leq m_n:I_{\tau i}\subset B(x,r)\}\text{ and }  J=\max\{1\leq i\leq m_n:I_{\tau i}\subset B(x,r)\}.\]
Note that $j\geq 2$ and $J\leq m_n-1$. 
We claim that in this case, $x-r\notin I_{\tau(j-1)}$ and $x+r\notin I_{\tau(J+1)}$. 
Suppose first, that $x-r<a_{n_0}:=b(\tau(j-1))$. Then we would have:
\begin{eqnarray*}
\frac{\mu(B(x,r))}{(2r)^t}& 
=& \frac{\mu([x-r,a])+\mu([a(\tau j),x+r])}{(2(a-(x-r))+2(x-a))^t}\\
& \geq& \frac{\mu([x-r,a])}{(2(a-(x-r)))^t}+\frac{\mu([a(\tau j),x+r])}{(2(x+r-a(\tau j)))^t}.
 \end{eqnarray*}
 This would contradict the hypothesis $\liminf_{n\to\infty}\frac{\mu(B(x,r))}{(2r)^t}
<\min\{2^{-t}\alpha^{-1},2^{-t}\beta^{-1}\}$, in view of the previous cases. The proof of $x+r<b_{n_0}$ is similar. 

Moreover, since the center of the ball $(x-r,x+r)$ is in $C$, we can shift $x$ by $d_n$ and still have $x-d_n-r>a_{n_0}$ and $x+d_{n_0}+r<b_{n_0}$. Then, we have:
\[
\frac{\mu(B(x,r))}{(2r)^t} \geq \frac{J-j+1}{\mu(n_0)(b_{n_0}-a_{n_0}-2d_{n_0})^t}\geq\gamma^{-1}-\varepsilon
\]

 \end{proof}
\begin{remark}
 Note that in Lemmas \ref{salto} and \ref{lowbound} the hypothesis $m_n\leq M$ was not used. 
\end{remark}

The proof of the upper bound will be divided into three Lemmas.

\begin{lemma}\label{alpha}
 For almost every $x\in C$, we have $\Theta(\mu,x)\leq 2^{-t}\alpha^{-1}$. 
\end{lemma}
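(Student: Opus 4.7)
The plan is to exhibit, for $\mu$-almost every $x\in C$, arbitrarily small radii $r$ along which $\mu(B(x,r))/(2r)^t$ lies close to $2^{-t}\alpha^{-1}$. First I would extract a subsequence $(n_k)$ along which $\alpha_{n_k}\to\alpha$ and write $L_n:=k_n s_n+\sum_{i=1}^{k_n} g_i^n$ so that $\alpha_n=\mu(n)L_n^t/k_n$. Then for each fixed integer $J\geq 1$ I would introduce the good event at level $n$,
\[
E_n^J:=\bigl\{x\in C:\sigma_{n-1}(x)\neq 1\text{ and }\sigma_n(x)=\sigma_{n+1}(x)=\cdots=\sigma_{n+J}(x)=1\bigr\}.
\]
Under $\mu$ the digits $\sigma_n(x)$ are independent and uniform on $\{1,\dots,m_n\}$, so $\mu(E_n^J)\geq(1-1/M)(1/M)^{J+1}$ uniformly in $n$. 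Thinning $(n_k)$ to a sub-subsequence whose consecutive terms are at distance at least $J+2$ makes the corresponding events involve disjoint digits and hence independent; since $\sum_k\mu(E_{n_k}^J)=\infty$, the second Borel--Cantelli lemma gives, for $\mu$-a.e.\ $x$, that $E_n^J$ occurs at infinitely many $n$ in this sub-subsequence. Intersecting these full-measure sets over $J\in\mathbb{N}$ produces a conull set on which $E_n^J$ holds infinitely often for every $J$.

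Fix such an $x$ and such an $n$; let $\tau$ be the length-$(n-1)$ truncation of $\sigma(x)$ and set $\delta:=x-a(\tau)$. Since the digits $\sigma_n,\dots,\sigma_{n+J}$ are all $1$, the point $x$ lies in the leftmost order-$(n+J)$ descendant of $I_\tau$, so $0\leq\delta\leq s_{n+J}$. I would take $r:=L_n$ and study $B(x,L_n)=[x-L_n,x+L_n]$: the hypothesis $\sigma_{n-1}(x)\neq 1$ supplies a level-$(n-1)$ gap immediately to the left of $I_\tau$ whose length is at least $L_n$ by Lemma~\ref{salto} with $\ell=1$, so the leftward extension $[x-L_n,a(\tau)]$ lies entirely in that empty gap, while the sliver $[a(\tau),x]$, contained in a basic interval of order $n+J$, contributes mass at most $1/\mu(n+J)$. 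On the right side, the inequality $L_n+s_n\leq s_{n-1}$ keeps $[x,x+L_n]$ inside $I_\tau$, and this interval covers precisely the basic intervals $I_{\tau 1},\dots,I_{\tau k_n}$ of order $n$ together with a leftmost piece of $I_{\tau(k_n+1)}$ of length $\delta$, contributing total mass at most $k_n/\mu(n)+1/\mu(n+J)$.

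Summing the two halves and using $\mu(n+J)/\mu(n)\geq 2^J$ (since each $m_i\geq 2$) will yield
\[
\frac{\mu(B(x,L_n))}{(2L_n)^t}\leq\frac{2^{-t}}{\alpha_n}\Bigl(1+\frac{1}{k_n 2^J}\Bigr)\leq\frac{2^{-t}}{\alpha_n}\bigl(1+2^{-J}\bigr).
\]
Passing to the $\liminf$ along the good sub-subsequence (where $\alpha_n\to\alpha$) gives $\Theta^t(\mu,x)\leq 2^{-t}\alpha^{-1}(1+2^{-J})$, and letting $J\to\infty$ finishes the argument. The hard part is forcing $B(x,L_n)$ to be essentially one-sided so that it captures exactly the $k_n$ basic intervals realising $\alpha_n$; this is precisely what the joint event $E_n^J$ achieves, with Lemma~\ref{salto} as the essential ingredient ensuring that the adjacent level-$(n-1)$ gap is wide enough to absorb the entire leftward extension of the ball.
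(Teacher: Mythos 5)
Your architecture is the same as the paper's (a Borel--Cantelli argument over digit-pinning events, a radius essentially equal to $L_n=k_ns_n+\sum_{i=1}^{k_n}g_i^n$, two slivers of order $n+J$ producing an $O(2^{-J})$ error), and your treatment of the right half of the ball is fine. The genuine gap is on the left: you claim the level-$(n-1)$ gap adjacent to $I_\tau$ has length at least $L_n$ ``by Lemma~\ref{salto} with $\ell=1$''. Lemma~\ref{salto} does not give this. A gap of order $n-1$ is only bounded below by $s_{n-1}(c-1)$, while $L_n=s_n\,d^n_{k_n+1}$ can be as large as $s_{n-1}(1-r_n)$; whenever $1<c<2-r_n$ (which \eqref{separation} allows, e.g.\ $c=1.1$) the adjacent gap can be much shorter than $L_n$. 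Indeed, the proof of Lemma~\ref{salto} reduces the desired inequality to $(1-r_n)r_{n-1}\cdots r_{n-L+1}\leq c-1$, which is obtained only by accumulating roughly $|\log(c-1)|/\log 2$ contraction factors, i.e.\ by climbing $L$ levels; it establishes nothing at $\ell=1$ (the printed range ``$1\leq\ell\leq L$'' in the statement is not what the proof supports). Concretely, with $m_n=2$, $c=1.1$ and $r_n=1/2.1$ one has $k_n=1$, $L_n=(1-r_n)s_{n-1}\approx 0.52\,s_{n-1}$ while the order-$(n-1)$ gap has length $0.1\,s_{n-1}$; your ball $B(x,L_n)$ then crosses that gap, penetrates deep into the left sibling of order $n-1$, and picks up extra mass comparable to $1/\mu(n)$, destroying the bound $\mu(B(x,L_n))\leq k_n/\mu(n)+O(1/\mu(n+J))$ by a constant factor.

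The repair is precisely the paper's device, and it is the reason the constant $L$ of Lemma~\ref{salto} enters the definition of the sets $A_j$ there: instead of asking $\sigma_{n-1}(x)\neq 1$, ask that the digit $L$ levels higher satisfy $\sigma_{n-L}(x)=m_{n-L}$, and pin the digits from $n-L+1$ through $n+J$ (to $1$, as you do, or to the $k$'s, as the paper does) so that $x$ still lies within a distance $o(L_n)$ of $a(\sigma(x)|n-L)$. Then the empty interval immediately to the left of that endpoint is a gap of order $n-L$, and for that level Lemma~\ref{salto} genuinely yields $g^{n-L}_t\geq L_n$, so the leftward half of the ball is absorbed. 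With this single modification your computation of the right-hand half, the $O(2^{-J})$ correction, and the limit $J\to\infty$ go through and essentially reproduce the paper's argument.
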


\begin{proof}
Define $k_n$, $\alpha_n$ and $\alpha$ like in the introduction. Pick an increasing sequence $n_j$ such that $\lim_{j\to\infty}\alpha_{n_j}=\alpha$. 
For each $j\geq 1$, define $J:=[\log j/\log M]$. Define the sets:
\[A_j=\{x\in C: \sigma(x)_{n_j-L}=m_{n_j-L}, \sigma(x)_{n_j-L+i}=k_{n_j-L+i} \text{ for }i=1,2,\dots,J\}.\]
Note that $\mu(A_j)=\frac{\mu(n_j-L)}{\mu(n_j-L+J)}\geq M^{-(J+1)}=(jM)^{-1}$ and in consequence, $\sum_j\mu(A_j)=\infty$. 

Now, consider $A=\bigcap_{k\geq j}\bigcup_{j\geq 1}A_j$. If we took $n_j$ sparse enough such that $A_j$ are independent, the Borel Cantelli Lemma implies that $\mu(A)=1$. Therefore it is enough to prove $\Theta(\mu,x)\leq 2^{-t}\alpha^{-1}$ for $x\in A$. 

Given $x\in A$, we have that $x\in A_j$ for infinitely many $j$ and for those $j$ we define:
\[
\rho_j=k_{n_j}s_{n_j}+\sum_{i=1}^{k_{n_j}}g^{n_j}_i-\left( k_{n_j-L+J}s_{n_j-L+J}+\sum_{i=1}^{k_{n_j-L+J}}g_i^{n_j-L+J}\right).
\]
We want to show that $B(x,\rho_j)\cap C=[a(\sigma(x)|n_j),b(\sigma(x)|n_j)]\cap C$ plus -perhaps- some isolated points. 

Put $\sigma:=\sigma(x)$ and $a(\sigma|n_j-L):=a$. We have:
\begin{equation}\label{length}
 x-a\leq k_{n_j-L+J}s_{n_j-L+J}+\sum_{i=1}^{k_{n_j-L+J}}g^{n_j-L+J}_i. 
\end{equation}
So, on one side, we have:
\begin{eqnarray*}
 x+\rho_j&=& x+k_{n_j}s_{n_j}+\sum_{i=1}^{k_{n_j}}g^{n_j}_i-\left( k_{n_j-L+J}s_{n_j-L+J}+\sum_{i=1}^{k_{n_j-L+J}}g_i^{n_j-L+J}\right)\\
 &\leq& a +k_{n_j}s_{n_j}+\sum_{i=1}^{k_{n_j}}g^{n_j}_i=b(\sigma|n_j+g^{n_j}_{k_{n_j}}).
\end{eqnarray*}
On the other side, by the Lemma \ref{salto}:
\begin{eqnarray*}
 x-\rho_j&=& x-k_{n_j}s_{n_j}-\sum_{i=1}^{k_{n_j}}g^{n_j}_i+ k_{n_j-L+J}s_{n_j-L+J}+\sum_{i=1}^{k_{n_j-L+J}}g_i^{n_j-L+J}\\
 &\geq & x-g^{n_j-L}_{n_j-L-1}+ k_{n_j-L+J}s_{n_j-L+J}+\sum_{i=1}^{k_{n_j-L+J}}g_i^{n_j-L+J}\\
 &\geq & a-g^{n_j-L}_{m_{n_{j-L}}-1},
\end{eqnarray*}
Therefore we have one inclusion. To see the other inclusion, note that if $J$ is big enough, the Lemma \ref{salto} also implies:
\begin{equation}\label{star}
 g^{n_j}_{k_{n_j}}\geq k_{n_j-L+J}s_{n_j-L+J}+\sum_{i=1}^{k_{n_j-L+J}}g_i^{n_j-L+J}.
\end{equation}
So, 
\begin{eqnarray*}
x+\rho_j&\geq & x+ k_{n_j}s_{n_j}+\sum_{i=1}^{k_{n_j}-1}g^{n_j}_i\\
&\geq & a+k_{n_j}s_{n_j}+\sum_{i=1}^{k_{n_j}-1}g^{n_j}_i=b(\sigma|n_j).
\end{eqnarray*}
On the other side, using \eqref{length} and \eqref{star} (twice), 
\begin{eqnarray*}
 x-\rho_j&=& x-k_{n_j}s_{n_j}-\sum_{i=1}^{k_{n_j}}g^{n_j}_i+ k_{n_j-L+J}s_{n_j-L+J}+\sum_{i=1}^{k_{n_j-L+J}}g_i^{n_j-L+J}\\
 &\leq & a + 2 k_{n_j-L+J}s_{n_j-L+J}+2 \sum_{i=1}^{k_{n_j-L+J}}g_i^{n_j-L+J}- k_{n_j}s_{n_j}-\sum_{i=1}^{k_{n_j}}g^{n_j}_i \\
 &\leq & a +  k_{n_j-L+J}s_{n_j-L+J}+\sum_{i=1}^{k_{n_j-L+J}}g_i^{n_j-L+J}-\left( k_{n_j}s_{n_j}+\sum_{i=1}^{k_{n_j}-1}g^{n_j}_i \right)\\&\leq& a.
\end{eqnarray*}

Now, \[\frac{\mu(B(x,\rho_j))}{(2\rho_j)^t}= 
\frac{k_{n_j}} {\mu(n_j)2^t\left(k_{n_j}s_{n_j}+\sum_{i=1}^{k_{n_j}}g^{n_j}_i\right)^t\left(1-A\right)^t},
\]
where 
\begin{eqnarray*}
 A & =  &\frac{k_{n_j-L+J}s_{n_j-L+J}+\sum_{i=1}^{k_{n_j-L+J}}g_i^{n_j-L+J}}{k_{n_j}s_{n_j}+\sum_{i=1}^{k_{n_j}}g^{n_j}_i}\\
 &=& \frac{s_{n_j-L+J-1}\left( d_{k_{n_j}-L+J}-d_{k_{n_j}}+r_{n_j-L+J}\right)}
 {s_{n_j}}\\
 &\leq & C 2^{-(J-L-1)}\to_{j\to\infty}0.   
\end{eqnarray*}

\end{proof}

\begin{lemma}
 For almost every $x\in C$, we have $\Theta(\mu,x)\leq 2^{-t}\beta^{-1}$. 
\end{lemma}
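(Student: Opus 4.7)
The proof mirrors Lemma \ref{alpha} via the left--right symmetry between the definitions of $\alpha_n$ and $\beta_n$. Where Lemma \ref{alpha} placed $x$ in the rightmost child at level $n_j - L$ so that the large gap $g^{n_j - L}_{m_{n_j - L} - 1}$ sat immediately to the left of $x$ and could absorb the ball's leftward extent, we now place $x$ in the leftmost child at level $n_j - L$, so that the large gap $g^{n_j - L}_1$ sits immediately to its right and absorbs the ball's rightward extent.

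Concretely, relabel $k_n \in \{2, \ldots, m_n\}$ to be the maximizer of $\beta_n$, pick an increasing sequence $(n_j)$, sparse enough for the events below to be independent, with $\beta_{n_j} \to \beta$, and set $J := [\log j / \log M]$. Define
\[
A_j := \bigl\{ x \in C : \sigma(x)_{n_j - L} = 1,\ \sigma(x)_{n_j - L + i} = k_{n_j - L + i}\ \text{for}\ i = 1, \ldots, J \bigr\}.
\]
As in Lemma \ref{alpha}, $\mu(A_j) \geq (jM)^{-1}$ and Borel--Cantelli gives $\mu(\limsup_j A_j) = 1$, so it suffices to prove the bound on this full-measure set.

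For such $x$, and each $j$ with $x \in A_j$, take
\[
\rho_j := (m_{n_j} - k_{n_j} + 1) s_{n_j} + \sum_{i=k_{n_j}}^{m_{n_j} - 1} g^{n_j}_i - \biggl( k_{n_j - L + J}\, s_{n_j - L + J} + \sum_{i=1}^{k_{n_j - L + J}} g^{n_j - L + J}_i \biggr).
\]
Invoking Lemma \ref{salto} twice --- exactly as in the passages around \eqref{length} and \eqref{star} but with the roles of left and right swapped --- one verifies that $B(x, \rho_j) \cap C$ coincides, up to isolated points, with the $(m_{n_j} - k_{n_j} + 1)$ rightmost basic intervals of order $n_j$ inside the order-$(n_j - 1)$ parent containing $x$. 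Hence
\[
\frac{\mu(B(x, \rho_j))}{(2 \rho_j)^t} = \frac{m_{n_j} - k_{n_j} + 1}{\mu(n_j)\, 2^t \bigl( (m_{n_j} - k_{n_j} + 1) s_{n_j} + \sum_{i=k_{n_j}}^{m_{n_j} - 1} g^{n_j}_i \bigr)^t (1 - A)^t},
\]
with $A \to 0$ by the same geometric-ratio bound used in Lemma \ref{alpha}. Letting $j \to \infty$ and using $\beta_{n_j} \to \beta$ yields $\Theta(\mu, x) \leq 2^{-t} \beta^{-1}$.

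The main obstacle is the bookkeeping step that $B(x, \rho_j)$ captures \emph{exactly} the intended $(m_{n_j} - k_{n_j} + 1)$ basic intervals. One must show that $x + \rho_j$ falls strictly inside $g^{n_j - L}_1$ (so no extra mass appears on the right) and that $x - \rho_j$ lands at the left endpoint of the leftmost basic interval in the rightmost block of $m_{n_j} - k_{n_j} + 1$ children of $I_{\sigma(x)|n_j - 1}$ (so no mass is lost on the left). Both reduce, via Lemma \ref{salto}, to showing that the inner-scale correction $k_{n_j - L + J}\, s_{n_j - L + J} + \sum_{i=1}^{k_{n_j - L + J}} g^{n_j - L + J}_i$ is dominated by the relevant gap widths --- a quantitative estimate that is insensitive to whether we reflect left or right, so Lemma \ref{salto} applies unchanged.
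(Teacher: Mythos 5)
Your proposal is correct and takes essentially the same approach as the paper: the paper's entire proof of this lemma is the single line ``analogous to the proof of Lemma \ref{alpha}'', and you have carried out exactly that left--right reflection (leftmost child at level $n_j-L$, gap $g^{n_j-L}_1$ absorbing the rightward extent, Lemma \ref{salto} for the bookkeeping). One minor slip worth fixing: the inner-scale correction subtracted in your $\rho_j$ should be the mirrored right-tail quantity $(m_{n_j-L+J}-k_{n_j-L+J}+1)s_{n_j-L+J}+\sum_{i=k_{n_j-L+J}}^{m_{n_j-L+J}-1}g^{n_j-L+J}_i$, which is what actually bounds $b(\sigma(x)|n_j-L)-x$, rather than the left-head quantity you wrote; this does not change the argument since either way the correction is negligible relative to the main term.
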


\begin{proof}
 The proof is analogous to the proof of Lemma \ref{alpha}
\end{proof}

\begin{lemma}
 If $\gamma^{-1}<2^{-t}\min\{\alpha^{-1},\beta^{-1}\}$ then $\Theta^t(\mu,x)\leq \gamma^{-1}$ for almost every $x\in C$. 
\end{lemma}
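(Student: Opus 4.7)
The proof parallels Lemma \ref{alpha}, replacing the one-sided ``left-end'' configuration by the two-sided ``center'' configuration associated with the indices $k^1_n,k^2_n$. The plan is to construct via Borel--Cantelli a set of full $\mu$-measure along which one can exhibit radii $\rho_j\to 0$ realizing the density $\gamma^{-1}$. First I would fix an increasing sequence $(n_j)$ of integers with $m_{n_j}\ge 3$ along which
\[
\gamma_{n_j}:=\frac{\mu(n_j)}{k^2_{n_j}-k^1_{n_j}+1}\bigl(b_{n_j}-a_{n_j}-2d_{n_j}\bigr)^t\ \longrightarrow\ \gamma,
\]
taken sparse enough that the constraints introduced below are pairwise independent. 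With $L$ as in Lemma \ref{salto} and $J_j:=\lfloor\log j/\log M\rfloor$, I would then let $A_j\subset C$ consist of those $x$ whose address $\sigma(x)$ has prescribed digits on a window around level $n_j$: at levels up to $n_j-L-1$ the digits match the template locating $x$ in the common level-$(n_j-L)$ ancestor of both $a_{n_j}$ and $b_{n_j}$, this ancestor being chosen interior to its parent so that gaps isolate it on both sides; at level $n_j$ the digit $\sigma(x)_{n_j}$ is forced into $\{k^1_{n_j},\ldots,k^2_{n_j}\}$; and at the $J_j$ subsequent levels the digits are chosen so that $x$ lies within $d_{n_j}+o(s_{n_j})$ of the midpoint $(a_{n_j}+b_{n_j})/2$. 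A routine count gives $\mu(A_j)\gtrsim M^{-(L+J_j+1)}\gtrsim 1/j$; sparsity of $(n_j)$ yields mutual independence, and Borel--Cantelli produces $\mu(\limsup A_j)=1$.

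For $x\in\limsup A_j$ and each $j$ with $x\in A_j$, I would take the radius
\[
\rho_j:=\tfrac12(b_{n_j}-a_{n_j}-2d_{n_j})+\eta_j,
\]
with a vanishing correction $\eta_j$ absorbing the actual position of $x$ relative to the midpoint. Applying Lemma \ref{salto} symmetrically --- once to the gap $g^{n_j}_{k^1_{n_j}-1}$ on the left and once to $g^{n_j}_{k^2_{n_j}}$ on the right --- one checks that the endpoints of $B(x,\rho_j)$ fall inside these gaps but not beyond them, so $B(x,\rho_j)$ meets $C$ in exactly the $k^2_{n_j}-k^1_{n_j}+1$ consecutive level-$n_j$ children between $a_{n_j}$ and $b_{n_j}$ (plus at most isolated points). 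Consequently $\mu(B(x,\rho_j))=(k^2_{n_j}-k^1_{n_j}+1)/\mu(n_j)$, and
\[
\frac{\mu(B(x,\rho_j))}{(2\rho_j)^t}\;=\;\frac{1}{\gamma_{n_j}(1+o(1))}\ \longrightarrow\ \gamma^{-1},
\]
yielding $\Theta^t(\mu,x)\le\gamma^{-1}$ for $\mu$-a.e.\ $x$.

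The hard step is verifying the claim that $B(x,\rho_j)$ captures precisely the intended block of children --- neither spilling into the siblings of index $<k^1_{n_j}$ or $>k^2_{n_j}$, nor reaching outside the level-$(n_j-1)$ parent. This requires two-sided gap isolation (hence the template built into the definition of $A_j$) and careful bookkeeping of the center-offset $d_{n_j}$, which is nontrivial because $(a_{n_j}+b_{n_j})/2$ typically fails to lie in $C$; invoking Lemma \ref{salto} on both flanks, together with the fact that the prescribed block of $J_j$ digits drives $x$ within $d_{n_j}+o(s_{n_j})$ of the midpoint, is what makes the radius $\rho_j$ work. The hypothesis $\gamma^{-1}<2^{-t}\min\{\alpha^{-1},\beta^{-1}\}$ is what makes the ``centered'' configuration binding: without it, the conclusion is already subsumed by the previous two lemmas, so there is nothing new to prove.
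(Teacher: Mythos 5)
Your overall strategy matches the paper's: a Borel--Cantelli argument over sets $A_j$ that prescribe a window of digits so that $x$ sits within $o(s_{n_j})$ of the point of $C$ nearest the midpoint of $[a_{n_j},b_{n_j}]$, followed by the radius $\rho_j\approx\tfrac12(b_{n_j}-a_{n_j})-d_{n_j}$ and the count $\mu(B(x,\rho_j))=(k^2_{n_j}-k^1_{n_j}+1)/\mu(n_j)$. Two phrasing issues first: forcing the digits at \emph{all} levels up to $n_j-L-1$ would make $\mu(A_j)$ of order $\mu(n_j-L-1)^{-1}$ and destroy the divergence of $\sum_j\mu(A_j)$ --- only the finitely many digits in the window may be prescribed, and since the centered ball never leaves the level-$(n_j-1)$ parent, no forcing before level $n_j$ is needed at all; also the level-$n_j$ digit must be the specific child flanking the gap that contains the midpoint, not an arbitrary element of $\{k^1_{n_j},\dots,k^2_{n_j}\}$, or else the subsequent digits cannot drive $x$ to within $d_{n_j}+o(s_{n_j})$ of the midpoint.

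The genuine gap is in the step you yourself flag as the hard one. Writing $m=(a_{n_j}+b_{n_j})/2$ and taking $x$ within $o(s_{n_j})$ of the nearest point $x_j\in C$ (say $x_j=m-d_{n_j}$), the right endpoint of the ball is $x+\rho_j\approx b_{n_j}-2d_{n_j}$, so for the ball to contain the whole outermost child indexed $k^2_{n_j}$ you need $2d_{n_j}\lesssim g^{n_j}_{k^2_{n_j}}$ --- a comparison between $d_{n_j}$ and a flanking gap at the \emph{same} level $n_j$. Lemma \ref{salto} cannot supply this: it compares the length of a block of level-$n$ children and gaps to gaps at coarser levels $n-\ell$, and says nothing about $d_{n_j}$ versus $g^{n_j}_{k^2_{n_j}}$. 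If $d_{n_j}$ is close to half of a large interior gap while the flanking gap is small, your ball fails to cover the outermost child and the identity $\mu(B(x,\rho_j))=(k^2_{n_j}-k^1_{n_j}+1)/\mu(n_j)$ collapses. This is exactly where the paper uses the hypothesis $\gamma^{-1}<2^{-t}\min\{\alpha^{-1},\beta^{-1}\}$ substantively: it sets $\varepsilon=\gamma-2^t\max\{\alpha,\beta\}>0$, extracts infinitely many $n$ with $d_n<g^n_i/2-\varepsilon'$ (where $g^n_i$ is the gap containing the midpoint), and builds the subsequence $(n_j)$ from those $n$, which is what makes the two-sided containment go through. Your closing remark that the hypothesis only serves to make the lemma non-vacuous is therefore a misreading: it is an active ingredient of the containment argument, and your sketch offers no substitute for it.
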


\begin{proof}
For those $n$ with $m_n\geq 3$, define $\gamma_n$, $k^1_n,k^2_n$, $a_n,b_n, d_n$ and $\gamma$ like in the introduction.
Put $\varepsilon=\gamma-2^t\max\{\alpha,\beta\}$. If $0<\varepsilon'<\varepsilon$, there are infinitely many $n$ such that $d_n<g^n_i/2-\varepsilon'$ where $i$ is such that $1/2(a_n+b_n)\in g^n_i$. Moreover, we can take $n_j$ such that for all $j$,  $n_j\geq 3$,  $d_{n_j}<g^{n_j}_i/2-\varepsilon'$for all $j$ and $\lim_{j\to\infty} \frac{\mu(n_j)}{k^2_{n_j}-k^1_{n_j}+1}\left(b_{n_j}-a_{n_j}-2 d_{n_j}\right)^t=\gamma$. For each $j\geq 1$ define $J=[\log j/\log M]$ and:
 \[
 A_j=\{x\in C: \sigma(x)|i=\sigma(x_j)|i \text{ for }i=n_j,n_j+1,\dots, n_j+J\}.
 \]
 Like in the proof of the previous Lemma, $A=\bigcap_{k\geq 1}\bigcup_{j\geq k} A_j$ has measure one if $n_j$ are sparse enough by the Borel Cantelli Lemma and therefore is enough to prove the thesis for $x\in A$.
 
 Fix $x\in A$, then $x\in A_j$ for infinitely many $j$ and for those $j$ define
 \(\rho_j=1/2(b_{n_j}-a_{n_j})-d_{n_j}-s_{n_j+J},\). 
 
 We assume $1/2(a_{n_j}+b_{n_j})\geq x_{n_j}$ since the other case is analogous. So, $\rho_{n_j}=x_{n_j}-s_{n_j+J}-a_{n_j}$ and:
 \[x+\rho_{n_j}\leq 2x_{n_j}-s_{n_j+J}-a_{n_j}<b_{n_j}.\]
 On the other side, 
 \[x-r_{n_j}\geq x_{n_j}-s_{n_j+j}-_{n_j}+s_{n_j+J}-a_{n_j}=a_{n_j}.\]
 
 Moreover, $x-\rho_{j}\leq a_{n_j}+g^{n_j}_{i_1}$, where $g^{n_j}_{i_1}$ is the gap next to $a_{n_j}$ and 
 $x+\rho_{j}\geq b_{n_j}-2d_{n_j}-2s_{n_j+J}$ which is bigger than $b_{n_j}-g^{n_j}_{i_2}$ where $g^{n_j}_{i_2}$ is the gap next to $b_{n_j}$ and the estimate is valid for $j$ big enough so that $s_{n_j+J}<\varepsilon'$. 
 
 In conclusion, 
  \[\frac{\mu(B(x,\rho_j)}{(2\rho_j)^t} =\frac{k^2_{n_j}-k^1_{n_j}+1}{b_{n_j}-a_{n_j}-2d_{n_j}(1-A)}, \]
  
  where $A=\frac{s_{n_j+J}}{b_{n_j}-a_{n_j}-2d_{n_j}}\to 0$ when $j\to \infty$. 

\end{proof}

\begin{proof}[Proof of Theorem \ref{packing}]
It is standard to see that $\pac^t(C)<\infty$ (see, for instance \cite{Bae06}, whith the necessary modification). Remember that $\mu=\kappa\pac^t\llcorner_C$ and in particular $\pac^t(C)=\kappa^{-1}$.
Since
$\Theta^t(\pac^t\llcorner_C,x)=1$ for $\pac^t$-almost every $x\in C$ (see Theorem 6.10 in \cite{Mat95}),
we have $\Theta^t(\mu,x)=\kappa \Theta^t(\pac\llcorner_C,x)=\kappa$. So, $\pac^t(C)=\Theta^t(\mu,x)^{-1}$, which implies the thesis in virtue of the previous Lemmas. 
\end{proof}

\begin{remark}
 \begin{enumerate}
  \item If $r_n=1/3$ and $m_n=2$, then $D_n=\{0,2/3\}$. So, $t=\log2/\log3$ and $\alpha_n=\beta_n=2^t=\alpha=\beta.$ Then, $\pac^t(C)=4^t$. This was obtained in \cite{FHW00}.
  \item If $r_n=r$, $m_n=m$ and $D_n=D$ for all $n$, then $C$ is self-similar. This result was previously obtained in \cite{Fen03}, in a more general version (the ratii of contraction need not be all the same).
  \item If $m_n=2$ for all $n$, we obtain the result in \cite{GZ13}.
 \end{enumerate}

\end{remark}


\begin{thebibliography}{10}

\bibitem{AS90}
E. Ayer and R.~S. Strichartz.
\newblock Exact {H}ausdorff measure and intervals of maximum density for
  {C}antor sets.
\newblock {\em Trans. Amer. Math. Soc.}, 351(9):3725--3741, 1999.

\bibitem{Bae06}
H.K. Baek.
\newblock Packing dimension and measure of homogeneous cantor sets.
\newblock {\em Bull. Austral. Math. Soc.}, 74:443--448, 2006.

\bibitem{Fal90}
K.~J. Falconer.
\newblock {\em Fractal Geometry:{M}athematical Foundations and Applications}.
\newblock John Wiley \& Sons, New York, 1990.

\bibitem{Fen03}
De-Jun Feng.
\newblock Exact Packing measure of linear {C}antor sets.
\newblock {\em Math. Nachr.}, 248-249:102--109, 2003.

\bibitem{FHW00}
De-Jun Feng, Su~Hua, and Zhi-Ying Wen.
\newblock The pointwise densities of the {C}antor measure.
\newblock {\em J. Math. Anal. Appl.}, 250(2):692--705, 2000.

\bibitem{GZ13}
I. Garcia and L. Zuberman.
\newblock Exact Packing measure of central cantor sets in the line.
\newblock {\em Journal of Mathematical Analysis and Applications}, 386, 2012.

\bibitem{LM12}
M. Llorente and M. Mor{\'a}n.
\newblock An algorithm for computing the centered {H}ausdorff measures of
  self-similar sets.
\newblock {\em Chaos Solitons Fractals}, 45(3):246--255, 2012.

\bibitem{LM16}
M. Llorente and M. Mor{\'a}n.
\newblock Computability of the Packing measure of totally disconnected
  self-similar sets.
\newblock {\em Ergodic Theory Dynam. Systems}, 36(5):1534--1556, 2016.

\bibitem{Mar85}
J. Marion.
\newblock Mesures de {H}ausdorff et th{\'e}orie de {P}erron-{F}robenius des
  matrices non-negatives.
\newblock {\em Ann. Inst. Fourier (Grenoble)}, 35(4):99--125, 1985.

\bibitem{Mat95}
P.~Mattila.
\newblock {\em Geometry of Sets and Measures in Euclidean Spaces}.
\newblock Cambridge University Press, Cambridge, 1995.

\bibitem{Mor05}
M. Mor{\'a}n.
\newblock Computability of the {H}ausdorff and Packing measures on self-similar
  sets and the self-similar tiling principle.
\newblock {\em Nonlinearity}, 18(2):559--570, 2005.

\bibitem{Ols08}
L. Olsen.
\newblock Density theorems for {H}ausdorff and Packing measures of self-similar
  sets.
\newblock {\em Aequationes Math.}, 75(3):208--225, 2008.

\bibitem{PP13}
S. Pedersen and J.~D. Phillips.
\newblock Exact {H}ausdorff measure of certain non-self-similar {C}antor sets.
\newblock {\em Fractals}, 21(3-4):1350016, 13, 2013.

\bibitem{Qiu15}
Hua Qiu.
\newblock Exact {H}ausdorff and Packing measures of {C}antor sets with
  overlaps.
\newblock {\em Ergodic Theory Dynam. Systems}, 35(8):2632--2668, 2015.

\bibitem{QZJ04}
Cheng-Qin Qu, Zuo-Ling Zhou, and Bao-Guo Jia.
\newblock The upper densities of symmetric perfect sets.
\newblock {\em J. Math. Anal. Appl.}, 292(1):23--32, 2004.

\bibitem{QRS03}
Chengqin Qu, Hui Rao, and Weiyi Su.
\newblock Hausdorff measures of homogeneous {C}antor sets.
\newblock In {\em Advances in mathematics research, {V}ol.\ 2}, volume~2 of
  {\em Adv. Math. Res.}, pages 75--79. Nova Sci. Publ., Hauppauge, NY, 2003.

\end{thebibliography}

\bibliographystyle{plain}

\end{document}